\definecolor{webgreen}{rgb}{0,.5,0}
\definecolor{webbrown}{rgb}{.6,0,0}
\begin{document}

\theoremstyle{plain}
\newtheorem{theorem}{Theorem}
\newtheorem{remark}{Remark}
\newtheorem{lemma}{Lemma}
\newtheorem{example}{Example}
\newtheorem{definition}{Definition}
\newtheorem{proposition}{Proposition}
\newtheorem{corollary}{Corollary}

\begin{center}
\vskip 1cm{\Large\bf Sums of powers of integers via differentiation}
\vskip .2in \large Jos\'{e} Luis Cereceda \\
{\normalsize Collado Villalba, 28400 (Madrid), Spain} \\
\href{mailto:jl.cereceda@movistar.es}{\normalsize{\tt jl.cereceda@movistar.es}}
\end{center}

\begin{abstract}
For integer $k \geq 0$, let $S_k$ denote the sum of the $k$th powers of the first $n$ positive integers $1^k + 2^k + \cdots + n^k$. For any given $k$, the power sum $S_k$ can in principle be determined by differentiating $k$ times (with respect to $x$) the associated exponential generating function $\sum_{k=0}^{\infty}S_k x^k/k!$, and then taking the limit of the resulting differentiated function as $x$ approaches $0$. In this paper, we exploit this method to establish a couple of seemingly novel recurrence relations, one of them involving the even-indexed power sums $S_2, S_4,\ldots, S_{2k}$, and the other the odd-indexed power sums $S_{1}, S_3, \ldots, S_{2k-1}$, with both recurrence relations depending explicitly on the parameter $N = n + \frac{1}{2}$. From this, we obtain a determinantal formula of order $k$ which yields $S_{2k}$ [$S_{2k-1}$] in the Faulhaber form, that is, as an odd [even] polynomial in $N$. As a byproduct, we discover a new determinantal formula for the Bernoulli number $B_{2k}$. Furthermore, we show that $S_{2k}$ and $S_{2k-1}$ can be obtained by taking the corresponding higher order derivatives of the Chebyshev polynomials of the second kind.
\end{abstract}

\section{Introduction}

For integers $k \geq 0$ and $n \geq 1$, consider the sum of powers $S_k = 1^k + 2^k + \cdots + n^k$. For fixed $n$, the sequence $S_0, S_1, \ldots, S_k, \ldots\, $ has the exponential generating function
\begin{equation}\label{egf}
\sum_{k=0}^{\infty} S_k \frac{x^k}{k!} = \sum_{r=1}^{n} e^{rx}.
\end{equation}
By summing the geometric series on the right-hand side of \eqref{egf}, one can in principle determine $S_k$, for any given $k$, as the following limit when $x$ approaches $0$:
\begin{equation}\label{met1}
S_k = \lim_{x\to 0} \, \frac{d^k}{d x^k} \left( \frac{e^{(n+1)x} -e^x}{e^x -1} \right), \quad k \geq 0,
\end{equation}
where $\frac{d^k}{d x^k} f(x)$ denotes the $k$th derivative of $f(x)$ with respect to $x$. For example, for $k=1$, we have
\begin{align*}
S_1 & = \lim_{x\to 0} \, \frac{d}{d x} \left( \frac{e^{(n+1)x} -e^x}{e^x -1} \right) \\
& =  \lim_{x\to 0} \, \frac{e^x - (n+1) e^{(n+1)x} + n e^{(n+2)x}}{( e^x - 1)^2}.
\end{align*}
In order to evaluate the involved limit, we apply L'H\^{o}pital's rule twice to get
\begin{align*}
\lim_{x\to 0} \, \frac{e^x - (n+1) e^{(n+1)x} + n e^{(n+2)x}}{( e^x - 1)^2}
& = \lim_{x\to 0} \, \frac{1 - (n+1)^2 e^{nx} + n(n+2) e^{(n+1)x}}{2( e^x - 1)} \\
& = \lim_{x\to 0} \, \frac{1}{2} \big( - n(n+1)^2 e^{(n-1)x} + n(n+1)(n+2) e^{nx} \big) \\
& = \frac{1}{2}n(n+1).
\end{align*}

This procedure was described by Schaumberger in \cite{shaum}. Moreover, in a very recent paper \cite{miller}, Due\~{n}ez, Hamakiotes, and Miller have developed a related approach to power sums through differentiation. Instead of using $\sum_{r=1}^{n} e^{rx}$, the authors of \cite{miller} use the geometric series
\begin{equation*}
1 + x + x^2 + \cdots + x^n = \frac{1-x^{n+1}}{1-x}.
\end{equation*}
To find $S_k$, one has to apply the operator $x\frac{d}{dx}$ $k$-times to each side of the previous equation and then take the limit as $x$ approaches $1$. This can be expressed compactly as
\begin{equation}\label{met2}
S_k = \lim_{x\to 1} \, \left( x\frac{d}{d x}\right)^k \left( \frac{1 - x^{n+1}}{1-x} \right), \quad k \geq 1.
\end{equation}
As pointed out in \cite{miller}, $k$ successive applications of $x\frac{d}{dx}$ to the geometric series produces an expression of the form (in our notation): $Q_n(x;k)/(1-x)^{k+1}$, where $Q_n(x;k)$ turns out to be a polynomial in $x$ of degree $n+k+1$. As a result, formula \eqref{met2} can be equivalently expressed as (cf.~\cite[Equation (37)]{gaut1})
\begin{equation}\label{miller2}
S_k = \frac{(-1)^{k+1}}{(k+1)!} \, \lim_{x\to 1} \frac{d^{k+1}}{d x^{k+1}} Q_n(x;k), \quad k \geq 1.
\end{equation}
Regarding the polynomial $Q_n(x;k)$, from Equations (10) and (13) of \cite{hsu} it can be deduced that
\begin{equation*}
Q_n(x;k) = \sum_{j=0}^k j! \genfrac{\{}{\}}{0pt}{}{k}{j}\bigg[ x^j (1-x)^{k-j} - x^{n+1} \sum_{r=0}^j
\binom{n+1}{j-r} x^r (1-x)^{k-r} \bigg],
\end{equation*}
where $\genfrac{\{}{\}}{0pt}{}{k}{j}$ are the Stirling numbers of the second kind. Alternatively, $Q_n(x;k)$ can be written in terms of the Eulerian polynomials $A_k(x)$ as follows:
\begin{equation*}
Q_n(x;k) = A_k(x) - x^{n+1} \sum_{j=0}^k \binom{k}{j} (n+1)^{j}(1-x)^{j} A_{k-j}(x),
\end{equation*}
with $A_0(x) =1$, $A_j(x) = \sum_{i=1}^j \genfrac{<}{>}{0pt}{}{j}{i} x^i$ for $j \geq 1$, and where $\genfrac{<}{>}{0pt}{}{j}{i}$ are the Eulerian numbers. As an example, we have
\begin{equation*}
Q_n(x;2) = x + x^2- (n+1)^2 x^{n+1} + (2n^2 +2n-1)x^{n+2} - n^2 x^{n+3},
\end{equation*}
and then, from \eqref{miller2}, it follows that
\begin{equation*}
S_2 = -\frac{1}{6} \, \lim_{x\to 1} \frac{d^{3}}{d x^{3}} Q_n(x;2) = \frac{1}{6}n(n+1)(2n+1).
\end{equation*}

Additionally, it is pertinent to mention that the action of $\big(x\frac{d}{dx} \big)^{k}$ on an arbitrary function $f(x)$ possessing $k$th derivatives amounts to the following operation on $f(x)$ (see e.g. \cite[Equation (3)]{knopf}):
\begin{equation*}
\left( x\frac{d}{dx} \right)^{k} \! f(x) = \sum_{j=1}^k \genfrac{\{}{\}}{0pt}{}{k}{j} x^j \frac{d^j f}{d x^j}.
\end{equation*}
Hence, one can equally compute $S_k$ by means of the formula
\begin{equation}\label{knopf}
S_k = \sum_{j=1}^k \genfrac{\{}{\}}{0pt}{}{k}{j} \lim_{x\to 1} \frac{d^j}{d x^j} \left(
\frac{1 - x^{n+1}}{1-x} \right), \quad k\geq 1.
\end{equation}
For example, for $k=3$, from \eqref{knopf} one obtains
\begin{align*}
S_3 & = \lim_{x\to 1} \frac{d}{d x} \left( \frac{1 - x^{n+1}}{1-x} \right)
+ 3 \lim_{x\to 1} \frac{d^2}{d x^2} \left( \frac{1 - x^{n+1}}{1-x} \right)
+ \lim_{x\to 1} \frac{d^3}{d x^3} \left( \frac{1 - x^{n+1}}{1-x} \right) \\
& = \frac{1}{2}n(n+1) + n (n^2-1) + \frac{1}{4}n( n^3 -2n^2 -n +2) \\
& = \frac{1}{4}n^2(n+1)^2.
\end{align*}

It should be noted that the methods presented in \cite{miller} and \cite{shaum} for obtaining $S_k$ are entirely equivalent to each other. This follows from the fact that, whenever $k \geq 1$, the right-hand side of \eqref{met1} can be converted into the right-hand side of \eqref{met2} (and vice versa) by interchanging $e^x \leftrightarrow x$, $\frac{d^k}{d x^k} \leftrightarrow \big( x\frac{d}{dx} \big)^k$, and $\lim_{x\to 0} \leftrightarrow \lim_{x\to 1}$. Furthermore, both formulas \eqref{met1} and \eqref{met2} provide $S_k$ as an explicit polynomial in $n$ of degree $k+1$. Unfortunately, however, the differentiation method embodied in either of the two fundamental formulas \eqref{met1} or \eqref{met2} is rather cumbersome to handle, and it becomes impractical for determining $S_k$ manually when $k$ increases above $k=3$ or $k=4$. Something similar can be said of the formulas \eqref{miller2} and \eqref{knopf}. In this paper, we turn this situation around and take advantage of the differentiation formula \eqref{met1} to derive a couple of seemingly novel recurrence relations, one of them involving the $k$ even-indexed power sums $S_2, S_4,\ldots, S_{2k}$, and the other the $k$ odd-indexed power sums $S_{1}, S_3, \ldots, S_{2k-1}$ (where $k \geq 1$), with both recurrences depending explicitly on the parameter $N = n + \frac{1}{2}$.

To this end, we first notice that, by making the transformation $x \to ix$, where $i$ is the imaginary unit $i = \sqrt{-1}$ and $x$ is assumed to be a real variable, the left-hand and the right-hand side of \eqref{egf} split into a real part plus an imaginary part, namely
\begin{equation*}
\sum_{k=0}^{\infty} (-1)^k S_{2k} \frac{x^{2k}}{(2k)!} + i \, \sum_{k=0}^{\infty} (-1)^k
S_{2k+1} \frac{x^{2k+1}}{(2k+1)!} = \sum_{r=1}^{n} \cos rx + i \, \sum_{r=1}^{n} \sin rx ,
\end{equation*}
from which it follows that
\begin{align}
& \sum_{k=0}^{\infty} (-1)^k S_{2k} \frac{x^{2k}}{(2k)!} = \sum_{r=1}^{n} \cos rx, \label{rpart} \\[-2mm]
\intertext{and}
& \sum_{k=0}^{\infty} (-1)^k S_{2k+1} \frac{x^{2k+1}}{(2k+1)!} = \sum_{r=1}^{n} \sin rx. \label{ipart}
\end{align}

\begin{remark}
When $n =1$, from \eqref{rpart} and \eqref{ipart} we retrieve the Maclaurin series expansion of the functions $\cos x$ and $\sin x$, namely, $\cos x = \sum_{k=0}^{\infty} (-1)^k \frac{x^{2k}}{(2k)!}$ and $\sin x = \sum_{k=0}^{\infty} (-1)^k \frac{x^{2k+1}}{(2k+1)!}$.
\end{remark}

Denoting $f_n(x) \equiv \sum_{r=1}^{n} \cos rx$ and $g_n(x) \equiv \sum_{r=1}^{n} \sin rx$, we can see from \eqref{rpart} and \eqref{ipart} that the following relations hold for any integer $k \geq 0$:
\begin{equation}\label{der1}
\left.
\begin{array}{l}
D_x^{2k}f_n(x)\big|_{x=0} = (-1)^k S_{2k}, \\[2mm]
D_x^{2k+1}f_n(x)\big|_{x=0} = 0,
\end{array}\right\}
\end{equation}
and
\begin{equation}\label{der2}
\left.
\begin{array}{l}
D_x^{2k}g_n(x)\big|_{x=0} = 0, \\[2mm]
D_x^{2k+1}g_n(x)\big|_{x=0} = (-1)^k S_{2k+1},
\end{array}\right\}
\end{equation}
where, for example, $D_x^{2k}f_n(x)\big|_{x=0}$ means the $2k$th derivative of $f_n(x)$ with respect to $x$, $\frac{d^{2k}}{d x^{2k}}f_n(x)$, evaluated at $x=0$.

Based on relations \eqref{der1} and \eqref{der2}, in Section 2, we establish the above-mentioned couple of recurrence relations  for $S_{2k}$ and $S_{2k-1}$ (Theorem \ref{th:1}). Then, from these recurrences, and using Cramer's rule, we obtain respective determinantal formulas of order $k$ for $S_{2k}$ and $S_{2k-1}$ (equations \eqref{det1} and \eqref{det2}). This yields $S_{2k}$ [$S_{2k-1}$] in the form of an odd [even] polynomial in $N =n +\frac{1}{2}$, in accordance with Faulhaber's theorem on sums of powers of integers (see e.g. \cite{beardon,hersh,knuth}). Alternatively, these determinantal formulas can be expressed in terms of $S_1 = \frac{1}{2}n(n+1)$, as shown in equations \eqref{det3} and \eqref{det4}. As a byproduct, we discover a new determinantal formula for the Bernoulli number $B_{2k}$ (equation \eqref{detber}). Furthermore, in Section 3, we show that $S_{2k}$ and $S_{2k-1}$ can be obtained by taking the corresponding higher order derivatives of the Chebyshev polynomials of the second kind. Using this fact and Fa\`{a} di Bruno's formula for the $k$th derivative of a composite function, we provide an alternate formula for the power sums $S_{2k}$ and $S_{2k-1}$ (Theorems \ref{th:2} and \ref{th:3}). We conclude in Section 4 with several relevant comments.

\section{Recurrent and determinantal formulas for $S_{2k}$ and $S_{2k-1}$}

In order to prove Theorem \ref{th:1} below, we need to invoke the following pair of well-known, elementary trigonometric identities.

\begin{lemma}
For $N = n + \frac{1}{2}$, we have
\begin{align}
\sum_{r=1}^{n} \cos rx & = \frac{\sin Nx - \sin(x/2)}{2\sin(x/2)}, \label{id1} \\[-2mm]
\intertext{and}
\sum_{r=1}^{n} \sin rx & = \frac{\cos(x/2) - \cos Nx}{2\sin(x/2)}. \label{id2}
\end{align}
\end{lemma}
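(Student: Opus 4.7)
The plan is to prove both identities by the standard telescoping argument. The key observation is the pair of product-to-sum identities
\[
2\sin(x/2)\cos(rx) = \sin\bigl((r+\tfrac{1}{2})x\bigr) - \sin\bigl((r-\tfrac{1}{2})x\bigr),
\]
\[
2\sin(x/2)\sin(rx) = \cos\bigl((r-\tfrac{1}{2})x\bigr) - \cos\bigl((r+\tfrac{1}{2})x\bigr),
\]
which follow immediately from the addition formulas for $\sin(\alpha\pm\beta)$ and $\cos(\alpha\pm\beta)$ applied with $\alpha = rx$ and $\beta = x/2$.

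To prove \eqref{id1}, I would multiply the left-hand side by $2\sin(x/2)$, substitute the first product-to-sum identity term by term, and observe that the resulting sum telescopes. All intermediate terms $\sin\bigl((r-\tfrac{1}{2})x\bigr)$ for $2\le r\le n$ cancel against the $\sin\bigl((r+\tfrac{1}{2})x\bigr)$ contribution of the preceding index, leaving only the boundary terms $\sin\bigl((n+\tfrac{1}{2})x\bigr) - \sin(x/2) = \sin(Nx) - \sin(x/2)$. Dividing through by $2\sin(x/2)$ yields \eqref{id1}. The proof of \eqref{id2} is entirely symmetric: the same multiplication by $2\sin(x/2)$, now using the second product-to-sum identity, produces a telescoping collapse to $\cos(x/2) - \cos(Nx)$, which gives \eqref{id2} upon division.

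There is no substantive obstacle here, as both identities are classical and the telescoping trick is standard; the only care required is the bookkeeping of the shifted indices. A more unified alternative, which dovetails nicely with the complex-variable approach already used in the introduction (compare \eqref{rpart}--\eqref{ipart}), would be to compute
\[
f_n(x) + i\,g_n(x) \;=\; \sum_{r=1}^{n} e^{irx} \;=\; \frac{e^{ix} - e^{i(n+1)x}}{1 - e^{ix}},
\]
and then multiply numerator and denominator by $e^{-ix/2}$ to rewrite the right-hand side as $\bigl(e^{iNx} - e^{ix/2}\bigr)/\bigl(2i\sin(x/2)\bigr)$. Separating real and imaginary parts recovers \eqref{id1} and \eqref{id2} simultaneously. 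Either route delivers both identities with minimal calculation.
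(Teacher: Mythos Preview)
Your proposal is correct. Your primary argument---multiplying by $2\sin(x/2)$ and telescoping via the product-to-sum identities---is a different route from the paper's, which instead takes exactly the complex-exponential path you describe as an alternative: it sums the geometric series $\sum_{r=1}^n e^{irx}$, factors out $e^{ix/2}$ in numerator and denominator to produce $2i\sin(x/2)$ downstairs, and then reads off \eqref{id1} and \eqref{id2} as the real and imaginary parts. Your telescoping argument has the advantage of staying entirely within real trigonometry and making the cancellation mechanism explicit, whereas the paper's approach is more economical (one computation yields both identities at once) and meshes naturally with the generating-function framework \eqref{egf}--\eqref{ipart} already in play. Since you already sketch the complex-exponential variant accurately, there is nothing to add.
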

\begin{proof}
Both \eqref{id1} and \eqref{id2} quickly follow by summing the geometric series $\sum_{r=1}^{n} e^{rx}$ with $x$ replaced by $ix$, that is
\begin{align*}
\sum_{r=1}^{n} e^{i rx} & = \frac{e^{i(n+1)x} - e^{ix}}{e^{ix} -1} = \frac{e^{iNx} - e^{ix/2}}{e^{ix/2} - e^{-ix/2}} \\
& = \frac{\cos Nx + i\sin Nx -\cos(x/2) - i\sin(x/2)}{2i\sin(x/2)},
\end{align*}
and then identifying the real (imaginary) part of the last equation with $\sum_{r=1}^{n} \cos rx$ (respectively, $\sum_{r=1}^{n} \sin rx$).
\end{proof}

Equipped with relations \eqref{der1} and \eqref{der2}, and identities \eqref{id1} and \eqref{id2}, we proceed to prove the following theorem.

\begin{theorem}\label{th:1}
Let $N = n + \frac{1}{2}$. Then, for any integer $k \geq 1$, we have
\begin{align}
\sum_{j=1}^k 4^j \binom{2k+1}{2j} S_{2j} & = 4^k N^{2k+1} - N, \label{th1} \\[-2mm]
\intertext{and}
\sum_{j=1}^k 4^j \binom{2k}{2j-1} S_{2j-1} & = 4^k N^{2k} - 1. \label{th2}
\end{align}
\end{theorem}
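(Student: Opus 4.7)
The plan is to clear the denominator in the trigonometric identities \eqref{id1} and \eqref{id2}, then apply Leibniz's rule to the resulting products and evaluate at $x = 0$, using the relations \eqref{der1}--\eqref{der2} together with standard expressions for derivatives of $\sin(ax)$ and $\cos(ax)$ at the origin. Specifically, I would rewrite the identities as
\begin{align*}
2\sin(x/2)\, f_n(x) + \sin(x/2) &= \sin(Nx), \\
2\sin(x/2)\, g_n(x) + \cos(Nx) &= \cos(x/2).
\end{align*}

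For \eqref{th1}, I would take the $(2k+1)$-st derivative of the first equation. Applying Leibniz,
\[
D_x^{2k+1}\!\bigl[2\sin(x/2)\, f_n(x)\bigr]\Big|_{x=0}
= \sum_{m=0}^{2k+1} \binom{2k+1}{m} D_x^{m}[2\sin(x/2)]\Big|_{x=0} \cdot D_x^{2k+1-m} f_n(x)\Big|_{x=0}.
\]
Only odd $m$ contribute (since $D_x^m[2\sin(x/2)]|_{x=0}$ vanishes for even $m$), and for such $m$ the complementary index $2k+1-m$ is even, so by \eqref{der1} the $f_n$-derivative equals $(-1)^{(2k+1-m)/2} S_{2k+1-m}$. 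Setting $m = 2k+1-2j$ ($j = 0,1,\dots,k$) and using $D_x^{2k+1-2j}[2\sin(x/2)]|_{x=0} = 2^{2j-2k}(-1)^{k-j}$, the signs combine to a common factor $(-1)^k$ and the sum collapses to
\[
(-1)^k\, 4^{-k} \sum_{j=0}^{k} \binom{2k+1}{2j} 4^{j}\, S_{2j}.
\]
Meanwhile, $D_x^{2k+1}\sin(x/2)|_{x=0} = (-1)^k 2^{-(2k+1)}$ and $D_x^{2k+1}\sin(Nx)|_{x=0} = (-1)^k N^{2k+1}$. Equating, cancelling $(-1)^k$, multiplying through by $4^k$, and isolating the $j=0$ term $S_0 = n$ together with the $\sin(x/2)$ contribution gives $n + 1/2 = N$ on the right, producing exactly \eqref{th1}.

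The proof of \eqref{th2} proceeds in parallel, taking $D_x^{2k}$ of the second rewritten identity. This time the nonvanishing terms come from odd $m$ paired with the odd index $2k-m$, and \eqref{der2} supplies the factor $(-1)^{(2k-m-1)/2}S_{2k-m}$. After the substitution $m = 2k+1-2j$ the odd-index power sums $S_{2j-1}$ appear with coefficients $4^{j-k}\binom{2k}{2j-1}$. On the right one computes $D_x^{2k}\cos(x/2)|_{x=0} = (-1)^k 4^{-k}$ and $D_x^{2k}\cos(Nx)|_{x=0} = (-1)^k N^{2k}$; cancelling the common sign and multiplying by $4^k$ yields \eqref{th2}.

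There is no deep obstacle here; the whole argument is a careful application of Leibniz's rule. The main source of difficulty is purely bookkeeping, namely keeping track of the parity restrictions (which terms vanish), the re-indexing $m \leftrightarrow j$, and the various sign factors $(-1)^{(m-1)/2}$ and $(-1)^{(2k+1-m)/2}$, which must combine to a single $(-1)^k$ so that everything cancels cleanly; the small constants $-N$ and $-1$ on the right-hand sides arise, respectively, from the isolated $\sin(x/2)$ term and the $j = 0$ contribution $S_0 = n$ in the first identity, and from the $\cos(x/2)$ term in the second.
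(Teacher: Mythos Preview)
Your proposal is correct and follows essentially the same approach as the paper: clear the denominators in \eqref{id1} and \eqref{id2}, differentiate the resulting products via Leibniz's rule, and evaluate at $x=0$ using \eqref{der1}--\eqref{der2}. The only cosmetic difference is that the paper absorbs the extra $\tfrac12$ into $F_n(x)=f_n(x)+\tfrac12$ before applying Leibniz, whereas you keep the standalone $\sin(x/2)$ term separate; the bookkeeping of signs and indices you describe matches the paper's computation.
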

\begin{proof}
To prove \eqref{th1}, we first write \eqref{id1} in the form
\begin{equation*}
2 \sin(x/2) F_n(x) = \sin Nx,
\end{equation*}
where $F_n(x) = f_n(x) + \frac{1}{2}$. It is easily seen that, for any integer $j \geq 0$,
\begin{equation}\label{pf1}
\left.
\begin{array}{l}
D_x^{2j}\sin Nx\big|_{x=0} = 0, \\[2mm]
D_x^{2j+1}\sin Nx\big|_{x=0} = (-1)^{j} N^{2j+1},
\end{array}\right\}
\end{equation}
and thus it follows that
\begin{equation}\label{pf2}
2 D_x^{2k+1} \big(\sin(x/2) F_n(x) \big)\big|_{x=0} = (-1)^{k} N^{2k+1}.
\end{equation}
Regarding the left-hand side of the previous equation, we apply Leibniz's formula for the $k$th derivative of the product of two functions to obtain
\begin{align*}
2D_x^{2k+1} \big(\sin(x/2) F_n(x) \big)\big|_{x=0} & = 2 \sum_{j=0}^{2k+1} \binom{2k+1}{j} D_x^{2k+1-j}\sin(x/2)\big|_{x=0}
D_x^j F_n(x)\big|_{x=0} \\
& = (-1)^{k}\frac{N}{4^{k}} + 2\sum_{j=1}^{2k+1} \binom{2k+1}{j} D_x^{2k+1-j}\sin(x/2)\big|_{x=0}
D_x^{j} f_n(x)\big|_{x=0}.
\end{align*}
Because of relations \eqref{der1} and \eqref{pf1}, this can be expressed as
\begin{align}
2D_x^{2k+1} \big(\sin(x/2) F_n(x) \big)\big|_{x=0} & = (-1)^{k}\frac{N}{4^{k}} + 2\sum_{j=1}^{k} \binom{2k+1}{2j} D_x^{2k+1-2j}\sin(x/2)\big|_{x=0} D_x^{2j} f_n(x)\big|_{x=0}  \notag \\
& = (-1)^{k}\frac{N}{4^{k}} + (-1)^{k} \frac{1}{4^{k}} \sum_{j=1}^k 4^j \binom{2k+1}{2j} S_{2j}. \label{pf3}
\end{align}
Hence, from \eqref{pf2} and \eqref{pf3}, we get \eqref{th1}.

Analogously, to prove \eqref{th2}, we first write \eqref{id2} as
\begin{equation*}
2 \sin(x/2) g_n(x) = \cos(x/2) - \cos Nx.
\end{equation*}
Thus, noting that
\begin{equation*}
\left.
\begin{array}{l}
D_x^{2j}\cos Nx\big|_{x=0} = (-1)^k N^{2j}, \\[2mm]
D_x^{2j+1}\cos Nx\big|_{x=0} = 0,
\end{array}\right\}
\end{equation*}
which holds for any integer $j \geq 0$, it follows that
\begin{equation}\label{pf4}
2 D_x^{2k} \big(\sin(x/2) g_n(x) \big)\big|_{x=0} = (-1)^{k} \Big( \frac{1}{4^k} - N^{2k} \Big).
\end{equation}
Using Leibniz's formula, the left-hand side of the preceding equation can be written as
\begin{equation*}
2 D_x^{2k} \big(\sin(x/2) g_n(x) \big)\big|_{x=0} = 2\sum_{j=0}^{2k} \binom{2k}{j} D_x^{2k-j}\sin(x/2)\big|_{x=0}
D_x^{j} g_n(x)\big|_{x=0},
\end{equation*}
which, by virtue of relations \eqref{der2} and \eqref{pf1}, can in turn be expressed as
\begin{align}
2 D_x^{2k} \big(\sin(x/2) g_n(x) \big)\big|_{x=0} & = 2\sum_{j=1}^{k} \binom{2k}{2j-1} D_x^{2k+1-2j}\sin(x/2)\big|_{x=0}
D_x^{2j-1} g_n(x)\big|_{x=0} \notag \\
& = (-1)^{k-1} \frac{1}{4^k} \sum_{j=1}^{k} 4^j \binom{2k}{2j-1} S_{2j-1}. \label{pf5}
\end{align}
Thus, from \eqref{pf4} and \eqref{pf5}, we get \eqref{th2}.
\end{proof}

\begin{remark}
In view of \eqref{th1} and \eqref{th2}, one immediately obtains the identity
\begin{equation*}
\sum_{j=1}^k 4^j \binom{2k+1}{2j} S_{2j} = \Big( n +\frac{1}{2} \Big)
\sum_{j=1}^k 4^j \binom{2k}{2j-1} S_{2j-1}, \quad k \geq 1.
\end{equation*}
For example, letting $k =4$ in the above identity leads to
\begin{equation*}
\frac{9S_2 + 126 S_4 + 336 S_6 + 144S_8}{S_1 + 28 S_3 + 112S_5 +64S_7} = 2n+1,
\end{equation*}
which holds for any integer $n \geq 1$.
\end{remark}

\begin{remark}
In \cite{chen2} (see also \cite{acu} for related work), the authors derived the following couple of recurrence relations:
\begin{align*}
\sum_{j=1}^k \binom{2k+1}{2j} S_{2j} & = \frac{1}{2} \big[ (n+1)^{2k+1} + n^{2k+1} -2n-1 \big], \\[-2mm]
\intertext{and}
\sum_{j=1}^k \binom{2k}{2j-1} S_{2j-1} & = \frac{1}{2} \big[ (n+1)^{2k} + n^{2k}-1 \big],
\end{align*}
which should be compared with the recurrences \eqref{th1} and \eqref{th2}.
\end{remark}

The first $k$ instances of the recurrence relation \eqref{th1} provides the following linear system of $k$ equations in the unknowns $S_2, S_4,\ldots, S_{2k}$:
\begin{equation*}
\begin{aligned}
& \quad\,\, 4 \binom{3}{2} S_2 = 4N^3 -N, \\
& \quad\,\, 4 \binom{5}{2} S_2 + 4^2 \binom{5}{4} S_4 = 4^2 N^5 -N,  \\
& \quad\,\, 4 \binom{7}{2} S_2 + 4^2 \binom{7}{4} S_4 + 4^3 \binom{7}{6} S_6 = 4^3 N^7 -N, \\
& \quad\quad\,\,\,\, \vdots \\
& 4 \binom{2k+1}{2} S_2 + 4^2 \binom{2k+1}{4} S_4 + \cdots + 4^{k-1} \binom{2k+1}{2k-2} S_{2k-2}
+ 4^k \binom{2k+1}{2k} S_{2k} = 4^k N^{2k+1} -N,
\end{aligned}
\end{equation*}
or, in matrix form,
\begin{equation*}
\begin{pmatrix}
4\binom{3}{2} & 0 & 0 & \! \hdots & 0 & 0 \\[3pt]
4\binom{5}{2} & \! 4^2 \binom{5}{4} & 0 & \! \hdots & 0 & 0 \\[3pt]
4\binom{7}{2} & \! 4^2 \binom{7}{4} & \! 4^3 \binom{7}{6} & \! \hdots  & 0 & 0 \\[3pt]
\vdots & \vdots & \ddots & \! \ddots & \vdots & \vdots \\[3pt]
4\binom{2k-1}{2} & \! 4^2 \binom{2k-1}{4} & \! 4^3 \binom{2k-1}{6} & \!\hdots & \! 4^{k-1}\binom{2k-1}{2k-2} & 0\\[5pt]
4\binom{2k+1}{2} & \! 4^2 \binom{2k+1}{4} & \! 4^3 \binom{2k+1}{6} & \! \hdots & \! 4^{k-1}\binom{2k+1}{2k-2} & \! 4^k \binom{2k+1}{2k}
\end{pmatrix}\!
\begin{pmatrix} S_2 \\[3pt] S_4 \\[3pt] S_6 \\ \vdots \\[3pt] S_{2k-2} \\[3pt] S_{2k}
\end{pmatrix}
= N \! \begin{pmatrix}
(2N)^2 -1 \\[4pt] (2N)^4 -1 \\[4pt] (2N)^6 -1 \\ \vdots \\[3pt] (2N)^{2k-2} -1 \\[3pt](2N)^{2k} -1
\end{pmatrix}.
\end{equation*}
Then, solving for $S_{2k}$, and applying Cramer's rule to the above triangular system of equations leads to the following determinantal formula for $S_{2k}$:
\begin{equation}\label{det1}
S_{2k} = \Delta_k \, N \!
\begin{vmatrix}
\binom{3}{2} & 0 & 0 & \! \hdots & 0 & \! (2N)^2 -1 \\[3pt]
\binom{5}{2} & \! \binom{5}{4} & 0 & \! \hdots & 0 & \! (2N)^4 -1 \\[3pt]
\binom{7}{2} & \! \binom{7}{4} & \! \binom{7}{6} & \! \hdots  & 0 & \! (2N)^6 -1 \\[3pt]
\vdots & \vdots & \vdots & \! \ddots & \vdots & \vdots \\[3pt]
\binom{2k-1}{2} & \! \binom{2k-1}{4} & \! \binom{2k-1}{6} & \!\hdots & \! \binom{2k-1}{2k-2} &
\! (2N)^{2k-2}-1 \\[5pt]
\binom{2k+1}{2} & \! \binom{2k+1}{4} & \! \binom{2k+1}{6} & \! \hdots & \! \binom{2k+1}{2k-2} &
\! (2N)^{2k} - 1
\end{vmatrix},
\end{equation}
where
\begin{equation*}
\Delta_ k = \frac{(k+1)!}{(2k+2)!\, 2^{k-1}},
\end{equation*}
and where, for $k=1$, $S_2 = \Delta_2 N [(2N)^2 -1]$. Formula \eqref{det1} gives $S_{2k}$ in the form of $N$ times an even polynomial in $N$ of degree $2k$ (or, equivalently, in the form of an odd polynomial in $N$ of degree $2k+1$). Indeed, developing the determinant on the right-hand side of  \eqref{det1} along its last column, we find that
\begin{equation*}
S_{2k} = \Delta_k N \left( \sum_{i=1}^k \alpha_{ik} (2N)^{2i} - \sum_{i=1}^k \alpha_{ik} \right),
\end{equation*}
where, for each $i=1,2,\ldots,k$, $\alpha_{ik}$ is the cofactor of the $i$th element of the last column of the determinant in \eqref{det1}. For example, for $k=5$, formula \eqref{det1} yields
\begin{align*}
S_{10} & = \frac{N}{10\,644\,480}
\begin{vmatrix}
3 & 0 & 0 & 0 & (2N)^2 -1 \\
10 & 5 & 0 & 0 & (2N)^4 -1 \\
21 & 35 & 7 & 0 & (2N)^6 -1 \\
36 & 126 & 84 & 9 & (2N)^8 -1 \\
55 & 330 & 463 & 165 & (2N)^{10} -1
\end{vmatrix}  \\[1mm]
& = \frac{1}{11}N^{11} - \frac{5}{12}N^9 + \frac{7}{8}N^7 - \frac{31}{32}N^5 + \frac{127}{256}N^3 - \frac{2555}{33792}N.
\end{align*}

Similarly, starting from \eqref{th2}, we can proceed as before to derive the following determinantal formula for $S_{2k-1}$:
\begin{equation}\label{det2}
S_{2k-1} = \Omega_k
\begin{vmatrix}
\binom{2}{1} & 0 & 0 & \! \hdots & 0 & \! (2N)^2 -1 \\[3pt]
\binom{4}{1} & \! \binom{4}{3} & 0 & \! \hdots & 0 & \! (2N)^4 -1 \\[3pt]
\binom{6}{1} & \! \binom{6}{3} & \! \binom{6}{5} & \! \hdots  & 0 & \! (2N)^6 -1 \\[3pt]
\vdots & \vdots & \vdots & \! \ddots & \vdots & \vdots \\[3pt]
\binom{2k-2}{1} & \! \binom{2k-2}{3} & \! \binom{2k-2}{5} & \!\hdots & \! \binom{2k-2}{2k-3} &
\! (2N)^{2k-2}-1\\[5pt]
\binom{2k}{1} & \! \binom{2k}{3} & \! \binom{2k}{5} & \!\hdots & \! \binom{2k}{2k-3} &
\! (2N)^{2k} -1
\end{vmatrix},
\end{equation}
where
\begin{equation*}
\Omega_k = \frac{1}{k!\, 8^k},
\end{equation*}
and where, for $k=1$, $S_1 = \Omega_1 [(2N)^2 -1]$. Now, the formula \eqref{det2} gives $S_{2k-1}$ in the form of an even polynomial in $N$ of degree $2k$. This can be seen by developing the determinant on the right-hand side of \eqref{det2} along its last column, as follows
\begin{equation*}
S_{2k-1} = \Omega_k \left( \sum_{i=1}^k \alpha_{ik}^{\prime} (2N)^{2i} - \sum_{i=1}^k \alpha_{ik}^{\prime} \right),
\end{equation*}
where, for each $i=1,2,\ldots,k$, $\alpha_{ik}^{\prime}$ is the cofactor of the $i$th element of the last column of the determinant in \eqref{det2}. For example, for $k=5$, formula \eqref{det2} yields
\begin{align*}
S_{9} & = \frac{1}{3\,932\,160}
\begin{vmatrix}
2 & 0 & 0 & 0 & (2N)^2 -1 \\
4 & 4 & 0 & 0 & (2N)^4 -1 \\
6 & 20 & 6 & 0 & (2N)^6 -1 \\
8 & 56 & 56 & 8 & (2N)^8 -1 \\
10 & 120 & 252 & 120 & (2N)^{10} -1
\end{vmatrix}  \\[1mm]
& = \frac{1}{10}N^{10} - \frac{3}{8}N^8 + \frac{49}{80}N^6 - \frac{31}{64}N^4 + \frac{381}{2560}N^2 - \frac{31}{2048}.
\end{align*}

On the other hand, by using the relation $(2N)^{2i} = (1+8S_1)^i$, and noting that $\Delta_ k N = (S_2/S_1) \Lambda_k$, where
\begin{equation*}
\Lambda_ k = \frac{3 \cdot (k+1)!}{(2k+2)!\, 2^{k}},
\end{equation*}
we can write \eqref{det1} in terms of $S_1$ and $S_2$ as
\begin{equation}\label{det3}
S_{2k} =  \left(\frac{S_2}{S_1}\right) \, \Lambda_k  \!
\begin{vmatrix}
\binom{3}{2} & 0 & 0 & \! \hdots & 0 & \! 8 S_1 \\[3pt]
\binom{5}{2} & \! \binom{5}{4} & 0 & \! \hdots & 0 & \! (1+8S_1)^2 -1 \\[3pt]
\binom{7}{2} & \! \binom{7}{4} & \! \binom{7}{6} & \! \hdots  & 0 & \! (1+8S_1)^3 -1 \\[3pt]
\vdots & \vdots & \vdots & \! \ddots & \vdots & \vdots \\[3pt]
\binom{2k-1}{2} & \! \binom{2k-1}{4} & \! \binom{2k-1}{6} & \!\hdots & \! \binom{2k-1}{2k-2} &
\! (1+8S_1)^{k-1} -1 \\[5pt]
\binom{2k+1}{2} & \! \binom{2k+1}{4} & \! \binom{2k+1}{6} & \! \hdots & \! \binom{2k+1}{2k-2} &
\! (1+8S_1)^{k} - 1
\end{vmatrix}.
\end{equation}
Similarly, we can write \eqref{det2} in terms of $S_1$ as
\begin{equation}\label{det4}
S_{2k-1} = \Omega_k
\begin{vmatrix}
\binom{2}{1} & 0 & 0 & \! \hdots & 0 & \! 8S_1 \\[3pt]
\binom{4}{1} & \! \binom{4}{3} & 0 & \! \hdots & 0 & \! (1+8S_1)^2 -1 \\[3pt]
\binom{6}{1} & \! \binom{6}{3} & \! \binom{6}{5} & \! \hdots  & 0 & \! (1+8S_1)^3 -1 \\[3pt]
\vdots & \vdots & \vdots & \! \ddots & \vdots & \vdots \\[3pt]
\binom{2k-2}{1} & \! \binom{2k-2}{3} & \! \binom{2k-2}{5} & \!\hdots & \! \binom{2k-2}{2k-3} &
\! (1+8S_1)^{k-1} -1\\[5pt]
\binom{2k}{1} & \! \binom{2k}{3} & \! \binom{2k}{5} & \!\hdots & \! \binom{2k}{2k-3} &
\! (1+8S_1)^{k} -1
\end{vmatrix}.
\end{equation}
The constant $\Lambda_k$ in front of the determinant in \eqref{det3} has been chosen so that the formula \eqref{det3} gives $S_{2k}$ directly as $S_2$ times a polynomial in $S_1$ of degree $k-1$. Furthermore, as it turns out, the formula \eqref{det4} gives $S_{2k-1}$ as $S_1^2$ times a polynomial in $S_1$ of degree $k-2$. Considering again the case of $k=5$, from \eqref{det3} and \eqref{det4} we obtain
\begin{align*}
S_{10} & = \left(\frac{S_2}{S_1}\right) \, \frac{1}{7\,096\,320}
\begin{vmatrix}
3 & 0 & 0 & 0 & 8S_1 \\
10 & 5 & 0 & 0 & (1+8S_1)^2 -1 \\
21 & 35 & 7 & 0 & (1+8S_1)^3 -1 \\
36 & 126 & 84 & 9 & (1+8S_1)^4 -1 \\
55 & 330 & 463 & 165 & (1+8S_1)^{5} -1
\end{vmatrix}  \\[2mm]
& \quad\quad\quad = S_2 \bigg( \frac{48}{11}S_1^4 - \frac{80}{11}S_1^3 + \frac{68}{11}S_1^2 - \frac{30}{11}S_1
+ \frac{5}{11} \bigg),
\end{align*}
and
\begin{align*}
S_{9} & = \frac{1}{3\,932\,160}
\begin{vmatrix}
2 & 0 & 0 & 0 & 8S_1 \\
4 & 4 & 0 & 0 & (1+8S_1)^2 -1 \\
6 & 20 & 6 & 0 & (1+8S_1)^3 -1 \\
8 & 56 & 56 & 8 & (1+8S_1)^4 -1 \\
10 & 120 & 252 & 120 & (1+8S_1)^{5} -1
\end{vmatrix}  \\[2mm]
& \quad\quad\quad\quad\quad = S_1^2 \bigg( \frac{16}{5}S_1^3 - 4 S_1^2 + \frac{12}{5}S_1 - \frac{3}{5} \bigg),
\end{align*}
respectively.

Naturally, the determinantal formulas \eqref{det1} and \eqref{det3} (respectively, \eqref{det2} and \eqref{det4}) are equivalent to each other, and constitute the two varieties of the Faulhaber theorem for the even-indexed power sums $S_{2k}$ (respectively, the odd-indexed power sums $S_{2k-1}$). For this reason, such formulas can be considered as the determinantal version of Faulhaber's theorem on sums of powers of integers \cite{beardon,knuth}

We can readily obtain a determinantal formula for the $2k$th Bernoulli number $B_{2k}$ starting from \eqref{det1}. To do this, we use the fact that, for all $k \geq 1$, the derivative of the even-indexed power sum $S_{2k}$ with respect to $n$ (considering $n$ as a continuous variable), when evaluated at $n=0$, is equal to $B_{2k}$ (see e.g. \cite{wu}). Hence, taking the derivative of the right-hand side of \eqref{det1} with respect to $n$ (recall that $N = n + \frac{1}{2}$), and making $n=0$, results in the following determinantal formula of order $k$ for $B_{2k}$:
\begin{equation}\label{detber}
B_{2k} = \frac{4 \cdot (k+1)!}{(2k+2)! \, 2^{k}} \!
\begin{vmatrix}
\binom{3}{2} & 0 & 0 & \! \hdots & 0 & \! 1 \\[3pt]
\binom{5}{2} & \! \binom{5}{4} & 0 & \! \hdots & 0 & \! 2 \\[3pt]
\binom{7}{2} & \! \binom{7}{4} & \! \binom{7}{6} & \! \hdots  & 0 & \! 3 \\[3pt]
\vdots & \vdots & \vdots & \! \ddots & \vdots & \vdots \\[3pt]
\binom{2k-1}{2} & \! \binom{2k-1}{4} & \! \binom{2k-1}{6} & \!\hdots & \! \binom{2k-1}{2k-2} &
\! k-1 \\[5pt]
\binom{2k+1}{2} & \! \binom{2k+1}{4} & \! \binom{2k+1}{6} & \! \hdots & \! \binom{2k+1}{2k-2} &
\! k
\end{vmatrix}.
\end{equation}
For example, for $k=6$, this formula yields
\begin{equation*}
B_{12} = \frac{1}{276\,756\,480} \,
\begin{vmatrix}
3 & 0 & 0 & 0 & 0 & 1 \\
10 & 5 & 0 & 0 & 0 & 2 \\
21 & 35 & 7 & 0 & 0 & 3 \\
36 & 126 & 84 & 9 & 0 & 4 \\
55 & 330 & 463 & 165 & 11 & 5 \\
78 & 715 & 1716 & 1287 & 286 & 6
\end{vmatrix}
= -\frac{691}{2730}.
\end{equation*}
Our formula \eqref{detber} for $B_{2k}$ may be compared with that obtained by Van Malderen in \cite{van}, namely
\begin{equation*}
B_{2k} = \frac{(-1)^{k+1}(2k)!}{2(2^{2k-1}-1)}
\begin{vmatrix}
\frac{1}{3!} & 1 & 0 & \! \hdots & \, 0 & 0 \\[3pt]
\frac{1}{5!} & \! \frac{1}{3!} & 1 & \! \hdots & \, 0 & 0 \\[3pt]
\frac{1}{7!} & \! \frac{1}{5!} & \! \frac{1}{3!} & \! \hdots  & \, 0 & 0 \\[3pt]
\vdots & \vdots & \vdots & \! \ddots & \vdots & \vdots \\[3pt]
\frac{1}{(2k-1)!} & \! \frac{1}{(2k-3)!} & \! \frac{1}{(2k-5)!} & \!\hdots & \, \frac{1}{3!} & 1 \\[5pt]
\frac{1}{(2k+1)!} & \! \frac{1}{(2k-1)!} & \! \frac{1}{(2k-3)!} & \! \hdots & \, \frac{1}{5!} & \frac{1}{3!}
\end{vmatrix}.
\end{equation*}
Other determinantal formulas for the Bernoulli numbers and polynomials can be found in \cite{chen,costa,qi}.

We conclude this section with the following important observation.

\begin{remark}
The Faulhaber form of $S_{2k-1}$, when expressed as a polynomial in $S_1$, is given by
\begin{equation*}
S_{2k-1} = c_{k,2} S_1^2 + c_{k,3} S_1^3 + c_{k,4} S_1^4 + \cdots + c_{k,k} S_1^{k}, \quad k \geq 2,
\end{equation*}
where all the $c_{k,2},c_{k,3},c_{k,4},\ldots\,$ are nonzero rational coefficients. As was pointed out elsewhere, the lack of the term in $S_1$ is due to the fact that, for all $k \geq 2$, the derivative of the polynomial $S_{2k-1}$ with respect to $n$ (considering $n$ as a continuous variable) is equal to zero when evaluated at $n=0$, which we may write as $S^{\prime}_{2k-1}\big|_{n=0} =0$. Indeed, upon looking at the determinantal formula \eqref{det4}, it is readily verified that, for $k \geq 2$,
\begin{equation*}
S^{\prime}_{2k-1}\big|_{n=0} = \Omega_k
\begin{vmatrix}
\binom{2}{1} & 0 & 0 & \! \hdots & 0 & \! 1 \cdot 4 \\[3pt]
\binom{4}{1} & \! \binom{4}{3} & 0 & \! \hdots & 0 & \! 2 \cdot 4 \\[3pt]
\binom{6}{1} & \! \binom{6}{3} & \! \binom{6}{5} & \! \hdots  & 0 & \! 3 \cdot 4 \\[3pt]
\vdots & \vdots & \ddots & \! \ddots & \vdots & \vdots \\[3pt]
\binom{2k-2}{1} & \! \binom{2k-2}{3} & \! \binom{2k-2}{5} & \!\hdots & \! \binom{2k-2}{2k-3} &
\! (k-1) \cdot 4 \\[5pt]
\binom{2k}{1} & \! \binom{2k}{3} & \! \binom{2k}{5} & \!\hdots & \! \binom{2k}{2k-3} &
\! k \cdot 4
\end{vmatrix}
=0,
\end{equation*}
the determinant above being equal to zero because its $k$th column vector is proportional to its first column vector.
\end{remark}

\section{A connection with the Chebyshev polynomials of the second kind}

In this section, we shall make use of a few fundamental properties of the Chebyshev polynomials of the second kind $U_n(x)$ (for these and many other properties of the Chebyshev polynomials, the reader may consult the Wikipedia entry \cite{wiki}). For $n \geq 0$, the Chebyshev polynomials of the second kind are defined recursively by $U_0(x) =1$, $U_1(x) = 2x$, and for $n \geq 2$, $U_n(x) = 2x U_{n-1}(x) -U_{n-2}(x)$. The first few polynomials $U_n(x)$ are given by
\begin{align*}
U_2(x) & = 4x^2 -1, & U_6(x) & = 64x^6 -80x^4 +24x^2 -1, \\
U_3(x) & = 8x^3 -4x, &  U_7(x) & = 128x^7-192x^5+80x^3 -8x, \\
U_4(x) & = 16x^4 -12x^2 +1, & U_8(x) & = 256x^8 -448x^6 +240x^4 -40x^2 +1, \\
U_5(x) & = 32x^5 -32x^3 +6x, & U_9(x) & = 512x^9 - 1024x^7 +672x^5 -160x^3 +10x,
\end{align*}
and they can be explicitly computed by
\begin{equation}\label{che1}
U_n(x) = \sum_{j=0}^{\lfloor n/2 \rfloor} (-1)^j \binom{n-j}{j} (2x)^{n-2j},
\end{equation}
where $\lfloor x \rfloor$ is the floor function. Another useful representation of $U_n$ is given by
\begin{equation*}
U_n(x) = \sum_{j=0}^{n} (-2)^j \binom{n+j+1}{2j+1} (1-x)^j,
\end{equation*}
from which we can easily deduce that
\begin{equation}\label{che2}
D_x^j U_n(x)\big|_{x =1} = 2^j j! \binom{n+j+1}{2j+1}, \quad j \geq 0.
\end{equation}
In particular, for $j=0$, we find that $U_n(1) = n+1$.

Of special interest for our purpose is the trigonometric definition of the Chebyshev polynomials of the second kind, namely
\begin{equation}\label{che3}
U_n( \cos x) = \frac{\sin((n+1)x)}{\sin x},
\end{equation}
or, equivalently,
\begin{equation}\label{che4}
U_{2n}( \cos (x/2)) = \frac{\sin Nx}{\sin (x/2)}.
\end{equation}
Incidentally, it is to be noted that, in the field of Fourier analysis, the collection of functions defined by $D_n(x) = \sin \big(n + \frac{1}{2}\big) x/\sin (x/2)$ is known as the Dirichlet kernel.

\subsection{Determining the power sums $S_{2k}$}

Combining \eqref{rpart} and \eqref{id1}, we have
\begin{equation*}
\sum_{k=0}^{\infty} (-1)^k S_{2k} \frac{x^{2k}}{(2k)!} = \frac{\sin Nx}{2\sin(x/2)} -\frac{1}{2}.
\end{equation*}
Therefore, it follows that, for $k \geq 1$,
\begin{equation}\label{met3}
S_{2k} = \frac{1}{2} (-1)^k \lim_{x\to 0} \, \frac{d^{2k}}{d x^{2k}} \left( \frac{\sin Nx}{\sin(x/2)} \right).
\end{equation}
For the simplest case of $k=1$, from \eqref{met3} we obtain
\begin{align*}
S_2 & = -\frac{1}{2} \lim_{x\to 0} \, \frac{d^{2}}{d x^{2}} \left( \frac{\sin Nx}{\sin(x/2)} \right) \\
& = \lim_{x\to 0} \,\left( \frac{N^2 \sin Nx}{2\sin(x/2)} + \frac{N \cos(x/2) \cos Nx}{2\sin^2 (x/2)}
- \frac{(3+\cos x)\sin Nx}{16\sin^3 (x/2)} \right) \\
& = N^3 + \lim_{x\to 0} \,\left( \frac{N \cos(x/2) \cos Nx}{2\sin^2 (x/2)}
- \frac{(3+\cos x)\sin Nx}{16\sin^3 (x/2)} \right).
\end{align*}
By using the \emph{Mathematica\small{\textsuperscript\circledR}} software, we are able to determine the first terms of the power series expansion about the point $x=0$ for the two functions within the last parenthesis. These are given by
\begin{align*}
\frac{N \cos(x/2) \cos Nx}{2\sin^2 (x/2)} & = 2N x^{-2} - \Big ( N^3 + \frac{N}{12} \Big) +
\frac{1}{960} \big( 80N^5 +40N^3 -7N \big) x^2 + O(x^4) + \ldots\, , \\[-2mm]
\intertext{and}
\frac{(3+\cos x)\sin Nx}{16\sin^3 (x/2)} & = 2N x^{-2} - \frac{N^3}{3} + \Big( \frac{N^5}{60} +
\frac{7N}{960} \Big) x^2 + O(x^4) + \ldots\, ,
\end{align*}
from which we conclude that
\begin{equation*}
S_2 = \frac{N^3}{3} - \frac{N}{12} = \frac{1}{6}n(n+1)(2n+1).
\end{equation*}
Needless to say, the process of calculating $S_{2k}$ by means of formula \eqref{met3} becomes excessively complex as $k$ increases, even more than what happened with the formulas \eqref{met1} and \eqref{met2}.

Interestingly, by virtue of \eqref{che4}, we can write \eqref{met3} in terms of the Chebyshev polynomials of the second kind as follows
\begin{equation}\label{met4}
S_{2k} = \frac{1}{2} (-1)^k D_x^{2k} \big( U_{2n}(\cos(x/2)) \big)\big|_{x=0}, \quad k \geq 1.
\end{equation}
It should be emphasized that the computation of $S_{2k}$ by means of \eqref{met4} and \eqref{che2} is more affordable to achieve than by directly utilizing formula \eqref{met3}.

\begin{example}
Let us denote $t \equiv \cos(x/2)$. Then, for $k=1$, from \eqref{met4} we obtain
\begin{align*}
S_2 & = -\frac{1}{2} D_x^2 \big( U_{2n}(\cos(x/2)) \big)\big|_{x=0} \\
& = \frac{1}{8} D_t \big( U_{2n}(t) \big)\big|_{t=1} = \frac{1}{4} \binom{2n+2}{3} = \frac{1}{6}n(n+1)(2n+1),
\end{align*}
where we have used \eqref{che2} to get $D_t \big( U_{2n}(t) \big)\big|_{t=1}$. Similarly, for $k=2$, from \eqref{met4} and \eqref{che2} we obtain
\begin{align*}
S_4 & = \frac{1}{2} D_x^4 \big( U_{2n}(\cos(x/2)) \big)\big|_{x=0} \\
& = \frac{1}{32} D_t \big( U_{2n}(t) \big)\big|_{t=1} + \frac{3}{32} D_t^2 \big( U_{2n}(t) \big)\big|_{t=1} \\
& = \frac{1}{16} \binom{2n+2}{3} + \frac{3}{4} \binom{2n+3}{5} = \frac{1}{30}n(n+1)(2n+1)(3n^2 + 3n-1).
\end{align*}
\end{example}

Now, to systematically evaluate the higher order derivative on the right-hand side of \eqref{met4}, $D_x^{2k} \big( U_{2n}(\cos(x/2)) \big)\big|_{x=0}$, we invoke the well-known Fa\`{a} di Bruno's formula \cite{johnson} for the $k$th derivative of a composite function. In doing so, and using \eqref{che2} and the fact that $D_x^{2j+1} \cos(x/2)\big|_{x=0} =0$ for all $j \geq 0$, we arrive straightforwardly at the following result.

\begin{theorem}\label{th:2}
For any integer $k \geq 1$, the even-indexed power sum $S_{2k}$ is given by
\begin{equation}\label{bruno}
S_{2k} = \frac{1}{2} \sum \frac{(2k)!}{b_1! b_2! \cdots b_k!} \prod_{r=1}^{k} \left( \frac{1}{4^r (2r)!}
\right)^{b_r} 2^m m! \binom{2n+m+1}{2m+1},
\end{equation}
where the sum is over all $k$-tuples of nonnegative integers $(b_1,b_2,\ldots,b_k)$ satisfying the constraint $b_1 + 2b_2 + \cdots + k b_k = k$, and where $m = b_1 + b_2 +\cdots +b_k$.
\end{theorem}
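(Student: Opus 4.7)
The plan is to begin from the identity \eqref{met4}, namely
\[
S_{2k} = \tfrac{1}{2}(-1)^k D_x^{2k}\bigl(U_{2n}(\cos(x/2))\bigr)\big|_{x=0},
\]
and to evaluate the $2k$-th derivative of the composite function $U_{2n}\circ g$, with $g(x)=\cos(x/2)$, by means of Fa\`a di Bruno's formula. Recall Fa\`a di Bruno in the ``partition'' form: if $n$ is a positive integer, then
\[
D_x^{n}(f(g(x))) = \sum \frac{n!}{b_1!b_2!\cdots b_n!}\, f^{(m)}(g(x)) \prod_{r=1}^{n}\left(\frac{g^{(r)}(x)}{r!}\right)^{b_r},
\]
the sum running over all $n$-tuples $(b_1,\ldots,b_n)$ of nonnegative integers with $b_1+2b_2+\cdots+nb_n=n$, and where $m=b_1+\cdots+b_n$.

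Next, I would specialize this to $n=2k$, $f=U_{2n}$, $g(x)=\cos(x/2)$, and evaluate at $x=0$. The key simplification is that $g^{(r)}(0)=0$ whenever $r$ is odd, while for $r=2s$ one has $g^{(2s)}(0)=(-1)^s/4^s$ (a direct consequence of $\cos^{(2s)}(y)|_{y=0}=(-1)^s$ combined with the chain rule factor $1/2^{2s}$). Consequently, in Fa\`a di Bruno's sum only those tuples with $b_r=0$ for odd $r$ contribute. I would then relabel, writing $b_{2r}$ as $b_r$: the constraint $2b_1+4b_2+\cdots+(2k)b_k=2k$ becomes $b_1+2b_2+\cdots+kb_k=k$, so the sum becomes indexed by partitions of $k$, exactly as stated in the theorem. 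With this relabelling, the product of derivative factors reads
\[
\prod_{r=1}^{k}\left(\frac{g^{(2r)}(0)}{(2r)!}\right)^{b_r} = \prod_{r=1}^{k}\left(\frac{(-1)^r}{4^r(2r)!}\right)^{b_r} = (-1)^{\,b_1+2b_2+\cdots+kb_k}\prod_{r=1}^{k}\left(\frac{1}{4^r(2r)!}\right)^{b_r} = (-1)^{k}\prod_{r=1}^{k}\left(\frac{1}{4^r(2r)!}\right)^{b_r},
\]
because the exponent $\sum r b_r$ equals $k$.

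To finish, I would evaluate the outer factor $f^{(m)}(g(0))=D_t^{m}U_{2n}(t)|_{t=1}$ using \eqref{che2} (with $n$ replaced by $2n$), obtaining $2^{m} m!\binom{2n+m+1}{2m+1}$. Multiplying everything together, the $(-1)^k$ coming from the sign computation above cancels the explicit $(-1)^k$ in \eqref{met4}, leaving exactly the claimed expression
\[
S_{2k} = \frac{1}{2}\sum \frac{(2k)!}{b_1!\,b_2!\cdots b_k!}\,\prod_{r=1}^{k}\left(\frac{1}{4^r(2r)!}\right)^{b_r} 2^{m} m!\binom{2n+m+1}{2m+1}.
\]
I do not anticipate a serious obstacle here: the argument is essentially bookkeeping. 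The only step that requires care is the reindexing of the partition condition after discarding the odd-order derivatives of $g$, together with tracking the $(-1)^k$ that emerges from $\prod(-1)^{rb_r}$ and ensuring it cancels the prefactor coming from \eqref{met4}.
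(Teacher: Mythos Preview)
Your proposal is correct and follows essentially the same approach as the paper: the paper's proof consists of exactly the plan you outline, namely applying Fa\`a di Bruno's formula to \eqref{met4}, using the vanishing of the odd-order derivatives of $\cos(x/2)$ at $x=0$, and evaluating the surviving derivatives via \eqref{che2}. Your write-up in fact supplies more of the bookkeeping (the reindexing of the partition constraint and the cancellation of the $(-1)^k$) than the paper, which simply asserts the result follows ``straightforwardly''.
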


\begin{example}
For $k=3$, the nonnegative solutions $(b_1,b_2,b_3)$ of the equation $b_1 +2b_2 +3b_3 =3$ are $(0,0,1)$, $(1,1,0)$, and $(3,0,0)$. Hence, using these solutions into formula \eqref{bruno} yields
\begin{align*}
S_6 & = \frac{1}{64} \binom{2n+2}{3} + \frac{15}{16} \binom{2n+3}{5} + \frac{45}{8}\binom{2n+4}{7} \\
& = \frac{1}{42}n(n+1)(2n+1)(3n^4 +6n^3 -3n+1).
\end{align*}
\end{example}

Notice that when the sum runs over all possible integer partitions of $k$, the parameter $m$ on the right-hand side of \eqref{bruno} takes on all integer values within the interval $1 \leq m \leq k$. Consequently, formula \eqref{bruno} provides $S_{2k}$ in the polynomial form
\begin{equation*}
S_{2k} = \sum_{m=1}^k p_{k,m} \binom{2n+m+1}{2m+1}, \quad k \geq 1,
\end{equation*}
where all the $p_{k,1}, p_{k,2}, \ldots, p_{k,k}$ are nonzero rational coefficients independent of $n$, with $p_{k,k} = (2k)!/2^{2k+1}$.

Alternatively, one can compute $S_{2k}$ by using the explicit formula \eqref{che1} for $U_n(x)$ into \eqref{met4}. This gives
\begin{equation*}
S_{2k} = \frac{1}{2} (-1)^k   \sum_{j=0}^{n} (-1)^j \binom{2n-j}{j} D_x^{2k}
\Big( \big(2\cos(x/2) \big)^{2n-2j} \Big)\Big|_{x=0},
\end{equation*}
or, equivalently,
\begin{equation}\label{met5}
S_{2k} = \frac{1}{2} (-1)^k \sum_{j=1}^{n} (-1)^{n+j} 4^j \binom{n+j}{2j}
D_x^{2k} \Big( \big(\cos(x/2) \big)^{2j} \Big)\Big|_{x=0},
\end{equation}
where the sum on the right-hand side of \eqref{met5} starts at $j=1$ because the involved derivative is obviously equal to zero when $j=0$. For $k=1,2,3$, and $4$, from \eqref{met5} we obtain the rather exotic formulas
\begin{align*}
S_2 & = \frac{1}{4} \sum_{j=1}^n (-1)^{n+j} 4^j j \binom{n+j}{2j}, \\
S_4 & = \frac{1}{8} \sum_{j=1}^n (-1)^{n+j} 4^j j(3j -1) \binom{n+j}{2j}, \\
S_6 & = \frac{1}{16} \sum_{j=1}^n (-1)^{n+j} 4^j j \big(15j^2 -15j +4 \big) \binom{n+j}{2j}, \\[-2mm]
\intertext{and}
S_8 & = \frac{1}{32} \sum_{j=1}^n (-1)^{n+j} 4^j j \big(105j^3 -210j^2 + 147j -34 \big) \binom{n+j}{2j},
\end{align*}
respectively. In general, $S_{2k}$ can be expressed in the form
\begin{equation}\label{ansaz}
S_{2k} = \frac{1}{2^{k+1}} \sum_{j=1}^n (-1)^{n+j} 4^j P_k(j) \binom{n+j}{2j}, \quad k \geq 1,
\end{equation}
where
\begin{equation*}
P_k(j) = (-1)^k 2^k D_x^{2k} \Big( \big(\cos(x/2) \big)^{2j} \Big)\Big|_{x=0}.
\end{equation*}
As it turns out, $P_k(j)$ is a polynomial in $j$ of degree $k$ without constant term and leading coefficient $(2k-1)!!$. Furthermore, the coefficients of $P_k(j)$ sum up to $2^{k-1}$ and have apparently alternating signs (the leading coefficient being positive). Indeed, by using Fa\`{a} di Bruno's formula to evaluate $D_x^{2k} \big( \big(\cos(x/2) \big)^{2j} \big)\big|_{x=0}$, and recalling that $D_x^{2j+1} \cos(x/2)\big|_{x=0} =0$ for all $j \geq 0$, it can be shown that
\begin{equation}\label{bruno2}
P_k(j) = 2^k \sum \frac{(2k)!}{b_1! b_2! \cdots b_k!} \prod_{r=1}^{k} \left( \frac{1}{4^r (2r)!}
\right)^{b_r}(2j)_m, \quad k \geq 1,
\end{equation}
where $(x)_m$ denotes the falling factorial $x(x-1)\cdots(x-m+1)$. As before, the sum in \eqref{bruno2} is over all solutions in nonnegative integers $b_1,b_2,\ldots,b_k$ of the equation $b_1 + 2b_2 + \cdots + k b_k = k$, with $m = b_1 + b_2 +\cdots +b_k$. For example, when $k=5$, the nonnegative solutions to the equation $b_1 +2b_2 +3b_3 +4b_4 +5b_5 =5$ are $(0,0,0,0,1)$, $(1,0,0,1,0)$, $(0,1,1,0,0)$, $(1,2,0,0,0)$, $(2,0,1,0,0)$, $(3,1,0,0,0)$, and $(5,0,0,0,0)$. Using these solutions into formula \eqref{bruno2}, and after some simple algebra, we find that
\begin{equation*}
P_{5}(j) = j \big( 945j^4 - 3150j^3 + 4095j^2 - 2370j + 496).
\end{equation*}
According to \eqref{ansaz}, we then have
\begin{equation*}
S_{10} = \frac{1}{64} \sum_{j=1}^n (-1)^{n+j} 4^j j \big( 945j^4 - 3150j^3 + 4095j^2 - 2370j + 496)
\binom{n+j}{2j}.
\end{equation*}

\begin{remark}
The preceding formulas for $S_4$, $S_6$, $S_8$, and $S_{10}$, can be rewritten as
\begin{align*}
S_4 & = \frac{3}{8} \sum_{j=1}^{n} (-1)^{n+j} 4^j j^2 \binom{n+j}{2j} - \frac{1}{2}S_2, \\
S_6 & = \frac{15}{16} \sum_{j=1}^{n} (-1)^{n+j} 4^j j^3 \binom{n+j}{2j} - \frac{1}{4}S_2 - \frac{5}{2}S_4, \\
S_8 & = \frac{105}{32} \sum_{j=1}^{n} (-1)^{n+j} 4^j j^4 \binom{n+j}{2j} +\frac{1}{8}S_2 - \frac{21}{4}S_4 - 7S_6, \\[-2mm]
\intertext{and}
S_{10} & = \frac{945}{64} \sum_{j=1}^{n} (-1)^{n+j} 4^j j^5 \binom{n+j}{2j} -\frac{33}{2}S_2 + \frac{163}{16}S_4
-\frac{147}{4} S_6 -15S_8,
\end{align*}
respectively. In general, from these expressions we can deduce the recurrence relation
\begin{equation*}
\sum_{r=1}^{k} c_{k,r} S_{2r} = \sum_{j=1}^{n} (-1)^{n+j} 4^j j^k \binom{n+j}{2j}, \quad k \geq 1,
\end{equation*}
which holds for certain numerical coefficients $c_{k,r}$, $r=1,2,\ldots,k$, having the property $\sum_{r=1}^k c_{k,r} =4$. In particular, $c_{1,1} =4$.
\end{remark}

\subsection{Determining the power sums $S_{2k-1}$}

Analogously, regarding the odd-indexed power sums $S_{2k-1}$, it follows from \eqref{ipart} and \eqref{id2} that
\begin{equation}\label{odd}
\sum_{k=0}^{\infty} (-1)^k S_{2k+1} \frac{x^{2k+1}}{(2k+1)!} = \frac{\cos(x/2) - \cos Nx}{2\sin(x/2)}.
\end{equation}
Then, noting the trigonometric identity
\begin{equation*}
\cos(x/2) - \cos Nx = 2\sin((n+1)x/2) \sin(nx/2),
\end{equation*}
and recalling \eqref{che3}, we can write the right-hand side of \eqref{odd} as:
\begin{equation*}
\frac{\cos(x/2) - \cos Nx}{2\sin(x/2)} = U_{n}(\cos(x/2)) \sin(nx/2).
\end{equation*}
This allows us to express $S_{2k-1}$ in the form
\begin{equation}\label{met6}
S_{2k-1} = (-1)^{k-1} D_x^{2k-1} \big( U_{n}(\cos(x/2)) \sin(nx/2) \big)\big|_{x=0}, \quad k \geq 1.
\end{equation}
Now, applying Leibniz's formula to evaluate the higher order derivative of the product function\linebreak $U_{n}(\cos(x/2)) \sin(nx/2)$, and using the relations \eqref{pf1}, we obtain
\begin{align*}
S_{2k-1} & = (-1)^{k-1} \sum_{j=0}^{2k-1} \binom{2k-1}{j} D_x^{2k-1-j} \big( U_{n}(\cos(x/2)) \big)\big|_{x=0}
\, D_x^j \sin(nx/2)\big|_{x=0} \\
& = 2 (-1)^{k} \sum_{j=1}^{k} (-1)^j 4^{-j} \binom{2k-1}{2j-1} D_x^{2(k-j)} \big( U_{n}(\cos(x/2)) \big)\big|_{x=0}
\, n^{2j-1},
\end{align*}
or, equivalently,
\begin{equation}\label{met7}
S_{2k-1} = \frac{2}{4^{k}} \sum_{j=0}^{k-1} (-1)^j 4^{j} \binom{2k-1}{2j} D_x^{2j} \big( U_{n}(\cos(x/2)) \big)\big|_{x=0}
\, n^{2k-2j-1}, \quad k \geq 1.
\end{equation}
For $k=1$, from \eqref{met7} we quickly obtain
\begin{equation*}
S_1 = \frac{1}{2}n U_n(1) = \frac{1}{2}n(n+1).
\end{equation*}
Furthermore, for $k=2$, and letting $t \equiv \cos(x/2)$, from \eqref{met7} we obtain
\begin{align*}
S_3 & = \frac{1}{8}\Big[ (n+1)n^3 -12n D_x^{2} \big( U_{n}(\cos(x/2)) \big)\big|_{x=0} \Big] \\
& = \frac{1}{8}\Big[ (n+1)n^3 + 3n D_t \big( U_{n}(t) \big)\big|_{t=1} \Big] \\
& =  \frac{1}{8}\bigg[ (n+1)n^3 + 6n \binom{n+2}{3} \bigg] = \frac{1}{4}n^2 (n+1)^2,
\end{align*}
where we have used \eqref{che2} to get $D_t \big( U_{n}(t) \big)\big|_{t=1}$.

As was done in the case of the even-indexed power sums, we may evaluate $D_x^{2j} \big( U_{n}(\cos(x/2)) \big)\big|_{x=0}$ by means of  Fa\`{a} di Bruno's formula, and then plugging the resulting expression into \eqref{met7}. This allows us to establish the following result, which is the analogue of Theorem \ref{th:2} for the odd-indexed power sums.
\begin{theorem}\label{th:3}
For any integer $k \geq 1$, the odd-indexed power sum $S_{2k-1}$ is given by
\begin{align}\label{bruno3}
\!\! S_{2k-1} = \frac{2}{4^{k}} \Bigg[ (n+1) n^{2k-1} + \sum_{j=1}^{k-1} 4^j n^{2k-2j-1} & \binom{2k-1}{2j}
\sum \frac{(2j)!}{b_1! b_2! \cdots b_j!} \notag \\
& \left. \times \, \prod_{r=1}^{j} \left( \frac{1}{4^r (2r)!} \right)^{b_r} 2^m m! \binom{n+m+1}{2m+1} \right],
\end{align}
where $S_1 = \frac{1}{2}n(n+1)$, and where, for each $j =1,2,\ldots,k-1$, the rightmost sum is over all \mbox{$j$-tuples} of nonnegative integers $(b_1,b_2,\ldots,b_j)$ satisfying the constraint $b_1 + 2b_2 + \cdots + j b_j = j$, with $m = b_1 + b_2 +\cdots +b_j$.
\end{theorem}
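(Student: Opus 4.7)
My approach is to start from equation \eqref{met7}, which has already been derived in the excerpt using Leibniz's rule and the parity identities \eqref{pf1}. The only remaining task is to evaluate the inner quantity $D_x^{2j}\bigl(U_n(\cos(x/2))\bigr)\big|_{x=0}$ for $0\le j\le k-1$ and to reassemble the pieces.

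The main step will be an application of Faà di Bruno's formula to the composition $f\circ g$ with $f(t)=U_n(t)$ and $g(x)=\cos(x/2)$. Since $g(0)=1$, the outer factors $f^{(m)}(g(0))$ coincide with $U_n^{(m)}(1)=2^m m!\binom{n+m+1}{2m+1}$ by virtue of \eqref{che2}. For the inner function I will use the routine fact that $g^{(2r-1)}(0)=0$ and $g^{(2r)}(0)=(-1)^r/4^r$ for all $r\ge 1$; this vanishing forces every surviving index tuple $(b_1,b_2,\ldots,b_{2j})$ in the Faà di Bruno sum to satisfy $b_r=0$ whenever $r$ is odd. Relabeling $b_r':=b_{2r}$ then rewrites the Faà di Bruno constraint $\sum_{r=1}^{2j} rb_r=2j$ as $\sum_{r=1}^{j} r b_r'=j$, while $m=\sum b_r'$, and the product $\prod_{r=1}^{j} ((-1)^r)^{b_r'}$ collapses to the constant factor $(-1)^j$ independent of the partition chosen.

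Feeding this back into \eqref{met7}, the $(-1)^j$ yielded by Faà di Bruno cancels exactly the $(-1)^j$ already present in \eqref{met7}, leaving the positive coefficient $4^j\binom{2k-1}{2j}n^{2k-2j-1}$ in front of the inner partition sum for each $j\ge 1$. I will split the $j=0$ contribution off separately: since the zeroth derivative is just evaluation and $U_n(1)=n+1$, this term produces the leading summand $(n+1)n^{2k-1}$ displayed in \eqref{bruno3}. Collecting the pieces yields exactly the asserted closed form.

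The only delicate point is the index bookkeeping in Faà di Bruno once the odd-order inner derivatives are forced to vanish; once the relabeling $b_r'=b_{2r}$ is carried out and the factor $\prod_r((-1)^r)^{b_r'}$ is identified as $(-1)^j$ via the partition constraint, the remainder of the argument is purely symbolic and mirrors the derivation of Theorem \ref{th:2}.
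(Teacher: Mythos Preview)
Your proposal is correct and follows essentially the same approach as the paper: the paper's argument is precisely to evaluate $D_x^{2j}\bigl(U_n(\cos(x/2))\bigr)\big|_{x=0}$ via Fa\`a di Bruno's formula (as was done for Theorem~\ref{th:2}) and substitute the result into \eqref{met7}. Your write-up is in fact more explicit than the paper's, correctly handling the vanishing of the odd-order derivatives of $\cos(x/2)$, the relabeling $b_r'=b_{2r}$, and the sign identity $\prod_r(-1)^{rb_r'}=(-1)^{\sum rb_r'}=(-1)^j$ that cancels against the $(-1)^j$ in \eqref{met7}.
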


\begin{example}
For $k=3$, formula \eqref{bruno3} yields
\begin{align*}
S_5 & = \frac{1}{32} \bigg[ (n+1)n^5 + (10n + 20n^3) \binom{n+2}{3} + 120n \binom{n+3}{5} \bigg] \\
& = \frac{1}{12} n^2 (n+1)^2 ( 2n^2 +2n -1).
\end{align*}
Likewise, for $k=4$, it gives
\begin{align*}
S_7 & = \frac{1}{128} \bigg[ (n+1)n^7 + (14n + 70n^3 +42n^5) \binom{n+2}{3}  \\
&  \quad\quad\quad\quad\,\,\,  + (840n +840n^3) \binom{n+3}{5} + 5040n \binom{n+4}{7} \bigg] \\
& = \frac{1}{24} n^2 (n+1)^2 (3n^4 + 6n^3 -n^2 -4n +2).
\end{align*}
\end{example}

In view of the expressions for $S_3$, $S_5$, and $S_7$ above, we can guess the general form of the power sum $S_{2k-1}$ obtained from \eqref{bruno3} to be
\begin{equation*}
S_{2k-1} = \frac{2}{4^{k}} \sum_{j=1}^k Q_{k,j}(n) \binom{n+j}{2j-1}, \quad k \geq 1,
\end{equation*}
where $Q_{k,j}(n)$ is an odd polynomial in $n$ of degree $2(k-j)+1$. Clearly, this formula holds for $k=1$ by setting $Q_{1,1}(n) =n$. In general, it turns out that $Q_{k,1}(n) = n^{2k-1}$ and $Q_{k,k} = (2k-1)! n$.

We end this section with the following remark.

\begin{remark}
Let us denote by $S_k(2n)$ the sum of the $k$th powers of the first $2n$ positive integers $1^k + 2^k + \cdots + (2n)^k$, to be distinguished from the sum of the $k$th powers of the first $n$ positive integers $S_k = 1^k + 2^k + \cdots + n^k$. Thus, using \eqref{met4} and \eqref{met7}, we can express $S_{2k-1}(2n)$ in the form
\begin{equation}\label{doub}
S_{2k-1}(2n) = (2n+1) n^{2k-1} + 2 \sum_{j=1}^{k-1} \binom{2k-1}{2j} S_{2j} \, n^{2k-2j-1}, \quad k \geq 1,
\end{equation}
with $S_1(2n) = (2n+1)n$. For example, putting $k=5$ in \eqref{doub} gives
\begin{equation*}
S_9(2n) = (2n+1)n^9 + 72n^7 S_2 + 252n^5 S_4 + 168n^3 S_6 + 18n S_8.
\end{equation*}
On the other hand, according to \cite[Equation (0.2)]{snow}, $S_{2k-1}(2n)$ can equally be expressed as
\begin{equation}\label{doub2}
S_{2k-1}(2n) = 2S_{2k-1} + n^{2k} + \sum_{j=1}^{2k-2} \binom{2k-1}{j} S_{j} \, n^{2k-1-j}, \quad k \geq 1.
\end{equation}
As a result, by combining \eqref{doub} and \eqref{doub2}, we can get yet another representation for $S_{2k-1}(2n)$, namely
\begin{equation*}
S_{2k-1}(2n) = 4S_{2k-1} - n^{2k-1} + 2\sum_{j=1}^{k-1} \binom{2k-1}{2j-1} S_{2j-1} \, n^{2k-2j}, \quad k \geq 1.
\end{equation*}
\end{remark}

\section{Concluding comments}

To summarize, in this paper we have first examined the standard differentiation formulas \eqref{met1} and \eqref{met2}. These formulas are intended to determine, for any given $k$, the power sum $S_k = 1^k + 2^k +\cdots + n^k$ as an explicit polynomial in $n$. As remarked earlier, however, using either \eqref{met1} or \eqref{met2} to get $S_k$ is very troublesome when one tries to calculate $S_k$ beyond the first values of $k$. The main point raised in this paper is that, in spite of this, we can still extract a lot of information about $S_k$ by exploiting the differentiation formula \eqref{met1}. Indeed, starting from \eqref{der1} and \eqref{der2}, we have derived a couple of recurrence relations involving the power sums $\{ S_{2j} \}_{j=1}^k$ and $\{ S_{2j-1} \}_{j=1}^k$, respectively, with both recurrences depending explicitly on the parameter $N = n + \frac{1}{2}$. This has allowed us to obtain, using Cramer's rule, a determinantal formula for $S_{2k}$ and $S_{2k-1}$ in terms of $N$ or, alternatively, in terms of $S_1$. As we have seen, our determinantal formulas for $S_{2k}$ and $S_{2k-1}$ represent the determinantal version of Faulhaber's theorem on sums of powers of integers. Furthermore, we have shown that $S_{2k}$ and $S_{2k-1}$ can be obtained by taking the corresponding higher order derivatives of the Chebyshev polynomials of the second kind, thus revealing a remarkable connection between these polynomials and the sums of powers of integers. We leave as an open question whether there is some kind of relationship between the power sums and the Chebyshev polynomials of the first kind.

The results obtained in the this paper can hopefully be generalized to the sum of the $k$th powers of the first $n$ terms of an arbitrary arithmetic progression with initial term $a$ and common difference $d$:
\begin{equation*}
S_k^{a,d} = a^k + (a+d)^k + (a+2d)^k + \cdots + ( a+(n-1)d)^k,
\end{equation*}
where $k,a,d,$ and $n$ are assumed to be integer variables with $k,a \geq 0$ and $d,n \geq 1$. Indeed, the exponential generating function of the sequence $S_0^{a,d}, S_1^{a,d},\ldots, S_k^{a,d}, \ldots, \, $ is given by
\begin{equation}\label{egf2}
\sum_{k=0}^{\infty} S_k^{a,d} \, \frac{x^k}{k!} = \sum_{r=1}^{n} e^{(a+ (r-1)d )x},
\end{equation}
which reduces to \eqref{egf} when $a = d=1$. Thus, from \eqref{egf2}, it follows that
\begin{equation}\label{met8}
S_k^{a,d} = \lim_{x\to 0} \, \frac{d^k}{d x^k} \left( \frac{e^{(a+nd)x} -e^{ax}}{e^{dx} -1} \right), \quad k \geq 0.
\end{equation}
As an example, for $k=1$, from \eqref{met8} one gets
\begin{equation*}
S_1^{a,d} = \lim_{x\to 0} \, \frac{d}{d x} \left( \frac{e^{(a+nd)x} -e^{ax}}{e^{dx} -1} \right)
= \frac{1}{2}n \big[ a + (a + (n-1)d)\big],
\end{equation*}
retrieving the well-known formula for $S_1^{a,d} = a + (a+d) + (a+2d) + \cdots + ( a+(n-1)d)$ as being $n$ times the arithmetic mean of the first and last terms. Let us observe that $S_1^{a,d}$ can be rewritten somewhat artificially in the form
\begin{equation*}
S_1^{a,d} = c_{1,0}^{a,d} + c_{1,1}^{a,d} \big( N_{a,d} \big)^2,
\end{equation*}
where $c_{1,0}^{a,d} = \frac{a}{2} -\frac{a^2}{2d} - \frac{d}{8}$, $c_{1,1}^{a,d} = \frac{d}{2}$, and $N_{a,d} = n + \frac{a}{d} - \frac{1}{2}$. This formula for $S_1^{a,d}$ constitutes a particular case of the general expression (cf. \cite[Theorem 2]{bazso})
\begin{equation*}
S_{2k-1}^{a,d} = \sum_{j=0}^{k} c_{k,j}^{a,d} \big( N_{a,d}\big)^{2j}, \quad k \geq 1.
\end{equation*}
Explicit formulas for the coefficients $c_{k,0}^{a,d}, c_{k,1}^{a,d}, \ldots, c_{k,k}^{a,d}$ can be found elsewhere. Incidentally, for the special case in which $a=1$ and $d=2$, the above general expression becomes
\begin{equation*}
1^{2k-1} + 3^{2k-1} + 5^{2k-1} + \cdots + (2n-1)^{2k-1} = \sum_{r=1}^{k} d_{k,r} n^{2r}, \quad k \geq 1.
\end{equation*}

On the other hand, by differentiating the identity
\begin{equation*}
\big( e^{dx} -1 \big) \sum_{r=1}^{n} e^{(a+ (r-1)d )x} = e^{(a +nd)x} - e^{ax}
\end{equation*}
$k$ times, and setting $x=0$, Wiener \cite{wiener} derived certain recurrences for the power sums $S_k^{a,d}$. Further recurrences for $S_k^{a,d}$ were obtained by Howard \cite{howard} by starting from the generating function \eqref{egf2}.

Furthermore, by making the transformations $e^{(a+nd)x} \to x^{a+nd}$, $e^{ax} \to x^{a}$, $e^{dx} \to x^{d}$,
$\frac{d^k}{d x^k} \to \big( x\frac{d}{dx} \big)^k$, and $\lim_{x\to 0} \to \lim_{x\to 1}$, the formula \eqref{met8} for $S_k^{a,d}$ can be written in the equivalent form
\begin{equation*}
S_k^{a,d} = \lim_{x\to 1} \, \left( x\frac{d}{d x}\right)^k \left( \frac{x^{a +nd} -x^a}{x^d -1} \right), \quad k \geq 0.
\end{equation*}
The method of obtaining $S_k^{a,d}$ based on this last formula was pursued by Gauthier in \cite{gaut2,gaut3}.

Finally, it is worth pointing out that, by using Equation (50) of \cite{xiong}, it can be deduced that
\begin{equation}\label{met9}
S_k^{a,d} = \frac{1}{(k+1)!} \, \lim_{x\to 1} \frac{d^{k+1}}{d x^{k+1}} Q_n^{a,d}(x;k),    \quad k \geq 0,
\end{equation}
where $Q_n^{a,d}(x;k)$ is a polynomial in $x$ of degree $n+k+1$ given by
\begin{equation*}
Q_n^{a,d}(x;k) = a^k x(x-1)^{k+1} + x^{n+1}T_k(x,a+d(n-1),-d) - x^2 T_k(x,a,-d).
\end{equation*}
In turn, $T_k(x,a,d)$ is a polynomial in $x$ of degree $k$ given by
\begin{equation*}
T_k(x,a,d) = \sum_{j=0}^k A_{k,j}(a,d) x^j,
\end{equation*}
where the involved coefficients $A_{k,j}(a,d)$, $j=0,1,\ldots,k$, are
\begin{equation*}
A_{k,j}(a,d) = \sum_{i=0}^j (-1)^i \big[(j+1-i)d -a \big]^k \binom{k+1}{i}.
\end{equation*}
(Please note that the definition of $T_k(x,a,d)$ given here is slightly different from the definition given in \cite{xiong}.) As an example, for $k=3$, we have that
\begin{align*}
Q_n^{a,d}(x;3) & = a^3 x(x-1)^4 + x^{n+1} \big[ (-a-d-d(n-1))^3 + \big( (-a-2d-d(n-1))^3 \\
& \quad -4(-a-d-d(n-1))^3 \big) x + \big( (-a-3d-d(n-1))^3 -4 (-a-2d-d(n-1))^3 \\
& \quad +6(-a-d-d(n-1))^3 \big) x^2 + \big( (-a-4d-d(n-1))^3 -4 (-a-3d-d(n-1))^3 \\
& \quad +6(-a-2d-d(n-1))^3 -4( -a-d-d(n-1))^3 \big) x^3 \big] - x^2 \big[ (-a-d)^3 \\
& \quad + \big( (-a-2d)^3 -4(-a-d)^3 \big) x + \big( (-a-3d)^3 -4 (-a-2d)^3 +6(-a-d)^3 \big) x^2 \\
& \quad + \big( (-a-4d)^3 -4 (-a-3d)^3 +6(-a-2d)^3 -4( -a-d)^3 \big) x^3 \big].
\end{align*}
Then, with the help of the \emph{Mathematica\small{\textsuperscript\circledR}} software, from \eqref{met9} we get
\begin{align*}
S_3^{a,d} & = \frac{1}{4} (4a^3 -6a^2 d + 2a d^2)n + \frac{1}{4} (6a^2 d -6a d^2 +d^3) n^2 \\
& \quad + \frac{1}{4} (4a d^2 - 2d^3) n^3 + \frac{1}{4} d^3 n^4.
\end{align*}
As expected, when $a=d=1$, we find that $S_3^{1,1} = \frac{1}{4} (n^2 + 2n^3 + n^4) = \frac{1}{4}n^2 (n+1)^2$.

\begin{remark}
Upon examining the proof of Lemma 14 of \cite{xiong}, we have noticed that one term is missing in the right-hand side of each of  Equations (49) and (50). Specifically, the missing term in both of them is $a^k x$.
\end{remark}

\end{document}